\documentclass[11pt]{article}
\usepackage[margin=1in]{geometry}
\usepackage{amsmath,amssymb,amsthm,mathtools}
\usepackage{enumitem}
\usepackage{hyperref}
\usepackage{microtype}
\hypersetup{colorlinks=true,linkcolor=blue,citecolor=blue,urlcolor=blue}

\newtheorem{theorem}{Theorem}[section]
\newtheorem{lemma}[theorem]{Lemma}

\newtheorem{corollary}[theorem]{Corollary}
\newtheorem{claim}[theorem]{Claim}
\theoremstyle{definition}
\newtheorem{definition}[theorem]{Definition}
\theoremstyle{remark}

\newcommand{\ceil}[1]{\left\lceil #1\right\rceil}

\title{Optimal binding function for (cap,even hole)-free graphs with no short odd holes}
\author{Chenglong Deng\thanks{School of Mathematical Sciences, Zhejiang Normal University,
Jinhua, Zhejiang 321004, China} \and  Xuding Zhu\thanks{School of Mathematical Sciences, Zhejiang Normal University,
Jinhua, Zhejiang 321004, China}}
\date{}

\begin{document}
\maketitle
\begin{abstract}
   A hole in a graph is an induced cycle of length at least $4$.  A cap is a hole together with a vertex adjacent to exactly two consecutive vertices of it. Chen, Xu and Xu conjectured that if $q\ge2$ and $G$ is a $(\mathrm{cap},\mathrm{even\ hole})$-free graph  with no odd hole of length at most $2q-1$, then $\chi(G)\le \left\lceil \frac{2q+1}{2q}\omega(G)\right\rceil.$
    They confirmed the conjecture for $q \le 3$.  In this paper, we prove the conjecture for all $q \ge 3$. As a corollary, we prove that for  such a graph $G$, $\chi_f(G)\le \frac{2q+1}{2q}\omega(G).$  

\end{abstract}

\section{Introduction}
 For a graph $G$, its clique number $\omega(G)$ is an obvious lower bound for its chromatic number $\chi(G)$. If $\chi(H)=\omega(H)$ for every induced subgraph $H$ of $G$, then $G$ is called a {\em perfect graph}. The famous strong perfect graph conjecture, proposed by Berge \cite{Berge} and confirmed by Chudnovsky, Robertson, Seymour and Thomas \cite{CRST}, gives a characterization of perfect graphs: A graph is perfect if and only if it has no odd hole and no odd anti-hole, where a {\em hole} is an induced cycle of length at least four, and an {\em anti-hole}  is an induced subgraph isomorphic to the complement of a cycle of length at least 5.  A hole or an anti-hole is even or odd if it has an even or an odd number of vertices. 

A classical result of Erd\H{o}s \cite{Erdos1959} says that there are graphs of arbitrarily large girth and arbitrarily large chromatic number. In particular, there are graphs with $\omega(G)=2$ and with arbitrary large chromatic number. 

In 1975, Gy\'{a}rf\'{a}s introduced the concept of $\chi$-bounded family of graphs: 
A class $\mathcal G$ of graphs is called {\em $\chi$-bounded}  if there is a function $f: \mathbb{N} \to \mathbb{N}$ such that $\chi(G)\le f(\omega(G))$ for every $G\in\mathcal G$. Since then, the problem of determining which graph families  are $\chi$-bounded  has been studied extensively in the literature (see \cite{ScottSeymour2020} for a survey). 

For two graphs $G$ and $H$, we say $G$  is {\em $H$-free} if $G$ does not contain an induced subgraph isomomorphic to $H$. If $\mathcal H$ is a family of graphs, then $G$ is $\mathcal H$-free if $G$ is $H$-free for every $H\in\mathcal H$.    The main problem in this area is for which graph family $\mathcal H$, the family of $\mathcal H$-free graphs is $\chi$-bounded. 

It follows from Erd\H{o}s' result that  the family of   $\mathcal H$-free graphs is not $\chi$-bounded, unless either 
$\mathcal H$ contains a forest or the lengths of holes in graphs in  $\mathcal H$  are unbounded.  
Gy\'{a}rf\'{a}s \cite{Gyarfas1975} and Sumner \cite{Sumner1981} conjectured independently that if $\mathcal H$ does contain a forest, then $\mathcal H$-free graphs is $\chi$-bounded (equivalently, for any tree $T$, the family of $T$-free graphs is $\chi$-bounded). This conjecture received a lot of attention \cite{Gyarfas1987, ScottSeymour2020} and is confirmed for some special families of trees, but remains open in general. 

Hole-free graphs are called {\em chordal} graphs, and they are perfect. Gy\'{a}rf\'{a}s \cite{Gyarfas1987} conjectured that the family of odd hole-free graphs, the family of (hole of length at least $\ell$)-free graphs, and the family of (odd hole of length at least $\ell$)-free graphs are $\chi$-bounded. These conjectures remained open for more than 40 years, and are confirmed by Scott and Seymour \cite{ScottSeymour},  Chudnovsky, Scott and Seymour \cite{CSS}, and Chudnovsky, Scott, Seymour and Spirkl \cite{CSSS}, respectively. 

Given a $\chi$-bounded family $\mathcal G$ of graphs,
a natural problem is to find an optimal binding function.  It is very rare that optimal (or near optimal)  binding functions for graph families  are known. For most families of graphs that are known to be  $\chi$-bounded, the proven binding functions are enormous, usually   exponential tower functions. For example, the bounding function for odd hole-free graphs $G$ proved by Scott and Seymour is $\chi(G) \le \frac{2^{2^{\omega(G)+1}}}{48(\omega(G)+1)}$. These functions are believed to be far from optimal. Esperet \cite{Esperet2017} asked whether every $\chi$-bounded hereditary family of graphs is poly-$\chi$-bounded, i.e. the binding function is a polynomial. This question was answered in negative by Bria\'{n}ski,   Davies and Walczak \cite{BDW2024}. However, for many known $\chi$-bounded families of graphs, it is still widely believed that the binding function should be polynomials (see \cite{CSSS2023}). 

The family of even hole-free graphs has a simpler structure. 
Chudnovsky and Seymour \cite{ChudnovskySeymour} proved that every even-hole-free graph $G$ has a bisimplicial vertex, i.e.,  a vertex whose neighbors are covered by two cliques. 
This implies that $\chi(G)\le 2\omega(G)-1$. It is unknown if this binding function is tight.

A {\em cap} is a graph consisting of a hole and a vertex adjacent to exactly two consecutive vertices of the cycle. The structure of $(\mathrm{cap},\mathrm{even\ hole})$-free graphs is even simpler. 
Cameron, da Silva, Huang and Vuskovic proved that $(\mathrm{cap},\mathrm{even\ hole})$-free graphs $G$ have $\chi(G)\le 3\omega(G)/2$ \cite{Cameron2018}. This upper bound was improved by Wu and Xu, and  Xu in \cite{WuXu,Xu2021}, and 
very recently, Chen, Xu and Xu proved the optimal   binding function for $(\mathrm{cap},\mathrm{even\ hole})$-free graphs  \cite{ChenXuXu}: every $(\mathrm{cap},\mathrm{even\ hole})$-free graph $G$ satisfies $\chi(G)\le \left\lceil {5\over 4}\omega(G)\right\rceil$. This bound is attained by the lexicographic product of $C_5$ and $K_k$: $\chi(C_5[K_k])=\lceil 5k/2 \rceil =\lceil 5\omega(G)/4 \rceil$. 
Chen, Xu and Xu further proved that every $(\mathrm{cap},\mathrm{even\ hole},5\text{-hole})$-free graph $G$ satisfies $  \chi(G)\le \left\lceil {7\over 6}\omega(G)\right\rceil$. 
Chen, Xu and Xu conjectured the following general pattern: if $q\ge 2$ and $G$ is a $(\mathrm{cap},\mathrm{even\ hole})$-free graph with no odd hole of length at most $2q-1$, then 
$ 
        \chi(G)\le \left\lceil {2q+1\over 2q}\omega(G)\right\rceil$.
This bound is   sharp, as for $G = C_{2q+1}[K_k]$, $\chi(G)=\lceil \frac{2q+1}{q} k \rceil = \lceil \frac{2q+1}{2q}\omega(G) \rceil$.

Their result confirms this conjecture for $q \le 3$. In this paper, we confirm this conjecture for all $q \ge 3$ (and hence the conjecture is completely confirmed). 

\begin{theorem}\label{thm:main}
 Assume $G$ is a $(\mathrm{cap},\mathrm{even\ hole})$-free graph. If $G$ has no odd hole of length at most $2q-1$ for some integer $q \ge 3$, then
\[
        \chi(G)\le \ceil{\frac{2q+1}{2q}\omega(G)}.
\]
\end{theorem}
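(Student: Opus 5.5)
We argue by minimal counterexample, using the known decomposition theorem for $(\mathrm{cap},\mathrm{even\ hole})$-free graphs due to Conforti, Cornu\'ejols, Kapoor and Vu\v{s}kovi\'c, as refined by Cameron, da Silva, Huang and Vu\v{s}kovi\'c \cite{Cameron2018}. Its form is: every such graph either has a clique cutset, or is triangulated, or has a universal vertex, or (after removing universal vertices) is a \emph{blowup of a ring}, or arises from a $2$-join/amalgam-type composition. Equivalently, the atoms (graphs with no clique cutset) of a $(\mathrm{cap},\mathrm{even\ hole})$-free graph are either chordal or obtained from a \emph{long ring} (a hole of length $2k+1$ whose vertices are blown up into cliques $X_1,\dots,X_{2k+1}$, where consecutive cliques are joined by nested neighbourhoods) by adding universal vertices. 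So let $G$ be a counterexample minimizing $|V(G)|$. Since $\chi$ of a graph with a clique cutset is the maximum of $\chi$ over the pieces, and $\omega$ does not increase on the pieces, $G$ has no clique cutset. A universal vertex $u$ can be removed: $\omega$ drops by one and $\chi$ drops by one, and $\lceil \tfrac{2q+1}{2q}(\omega-1)\rceil+1\le\lceil\tfrac{2q+1}{2q}\omega\rceil$. If $G$ is chordal it is perfect. Hence we may assume that $G$ is a ring blowup on an odd hole of length $2k+1$ with $2k+1\ge 2q+1$, by the short-odd-hole hypothesis.

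The core step is to color a ring $R$ with cliques $X_1,\dots,X_{n}$, $n=2k+1\ge 2q+1$, using $c=\lceil\tfrac{2q+1}{2q}\omega\rceil$ colors. In a ring, each $X_i$ is ordered $x_i^1,\dots,x_i^{|X_i|}$ so that $N(x_i^j)\cap X_{i+1}$ is a decreasing family of initial segments (and similarly toward $X_{i-1}$). Every clique lies in $X_i\cup X_{i+1}$ for some $i$, so $\omega=\max_i \omega(G[X_i\cup X_{i+1}])$. I would first handle the ``uniform'' blowup $C_n[K_{s}]$ type bound, which reduces to fractional arguments: the circular/fractional chromatic number of $C_{n}$ is $2+1/k\le 2+1/q$. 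I would then assign each $X_i$ an interval of colors on the cycle $\mathbb{Z}_c$, in the style of the circular-arc coloring of Chen--Xu--Xu for $q\le 3$. Processing $i=1,\dots,n$, each $x_i^j$ receives a color greedily from a cyclic window, and the nested-neighbourhood structure guarantees that conflicts only occur with early vertices of $X_{i\pm1}$. Concretely, I would color $X_i$ from the top of a window and $X_{i+1}$ from the bottom, so that the high-degree (initial) vertices of adjacent cliques occupy disjoint color sets, while the tails may reuse colors. Going around the odd cycle accumulates a shift, and the key inequality is that after $n$ steps the total shift is at most $c$. This is where $n\ge 2q+1$ and $c\ge \tfrac{2q+1}{2q}\omega$ enter: with $\omega\ge |X_i|+|X_{i+1}|$ only when everything is complete, it amounts to the averaging $\sum_i|X_i|\le \tfrac{n}{2}\omega$ and $c\ge \tfrac{n}{n-1}\omega$ roughly.

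The main obstacle is exactly this closing-up step for general (non-uniform, non-complete between consecutive cliques) rings. A naive averaging only gives $\tfrac{n}{n-1}\omega$ when consecutive cliques are complete. With nested partial adjacency, one must choose a good starting index and exploit a vertex in each $X_i$ that is complete to $X_{i-1}$ and $X_{i+1}$ (which a ring guarantees). My first attempt would be an induction on $|V(R)|$ within rings. I would delete a vertex $x_i^{|X_i|}$ of smallest neighbourhood. If this lowers $\omega$, or if its degree is below $c$, we can extend the coloring. Otherwise every ``last'' vertex has degree at least $c$, which forces all consecutive pairs to be nearly complete. In that case I would reduce to the explicit interval coloring of $C_n[K_{a_1},\dots,K_{a_n}]$ with $a_i+a_{i+1}\le\omega$, where $\chi\le\max(\omega,\lceil\sum a_i/k\rceil)\le\lceil\tfrac{2k+1}{2k}\omega\rceil\le c$ since $k\ge q$.
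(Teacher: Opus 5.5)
\textbf{There is a genuine gap in your structural reduction, and the rest of the proof depends on it.} The decomposition you state does not hold for $(\mathrm{cap},\mathrm{even\ hole})$-free graphs. What is true (Lemma~\ref{lem:structure}, from \cite{Cameron2018}) is that an atom containing a hole is a clique blowup, plus a universal clique, of a triangle-free skeleton $F$. Here $F$ can be \emph{any} triangle-free even-hole-free graph without a clique cutset, not just a cycle or a ring.

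For example, take a hole of length $3(2q-1)$ and add a vertex adjacent to three rim vertices that are pairwise at distance $2q-1$. This graph has the following properties:
\begin{itemize}
\item it is triangle-free, so all its clique blowups are cap-free;
\item its only holes have lengths $2q+1$ and $6q-3$;
\item it has no clique cutset and no universal vertex, and it is not chordal.
\end{itemize}
It is not a ring plus universal vertices, because every hole of a ring on cliques $X_1,\dots,X_k$ meets each $X_i$ exactly once and so has length $k$. Your first formulation also lists a $2$-join/amalgam outcome, but you then drop it and never treat it.

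So once you assume $G$ is a ring blowup, you have only covered skeletons that are a single odd hole $C_{2k+1}$ with $k\ge q$. In that case your bound $\max(\omega,\lceil\sum a_i/k\rceil)\le\lceil\frac{2k+1}{2k}\omega\rceil$ is correct and corresponds to the paper's explicit interval coloring of a weighted odd cycle. Your discussion of nested, non-complete adjacencies is beside the point, because in a blowup adjacent cliques are complete to each other. It is also unproved: the ``nearly complete'' reduction and the closing-up step are left open.

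\textbf{What is missing is an argument for skeletons that are not cycles.} The paper does the following.
\begin{itemize}
\item It works with weighted $K$-colorings of $(F,w)$, where $K=\lceil\frac{2q+1}{2q}W\rceil$.
\item It inducts via Lemma~\ref{lem:good-ear}: $F$ arises from a smaller graph by a good ear $P$, and the hole vertex $y$ between the attachments of $P$ has an odd number of neighbours on $P$.
\item It takes three consecutive neighbours $x,a,z$ of $y$ on $P$. Each of the segments $xPa$ and $aPz$ closes up with $y$ into an odd hole, so each has odd length at least $2q-1$.
\item It deletes the interiors of these segments, colors the rest by minimality, and then re-inserts them.
\item For the re-insertion, Lemma~\ref{lem:path} shows that a path $v_1\cdots v_{2m}$ can be colored with prescribed end sets $A_1,A_{2m}$ whenever $|A_1\cap A_{2m}|\le (m-1)K-S$. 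So it suffices to choose $\phi(a)$ disjoint from $\phi(y)$ with small enough overlap with both $\phi(x)$ and $\phi(z)$.
\end{itemize}
The counting that makes this choice possible (Claim~\ref{clm-rlrr} and the final inequality $w(a)\le 2(q-1)(K-W)$) is exactly where $q\ge 3$ is used. You need some inductive step of this kind for general skeletons; the ring picture cannot replace it.
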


As a corollary, we prove the following upper bound on the fractional chromatic number of these graphs. 

\begin{corollary}\label{cor:fractional}
  If $q \ge 2$ and $G$ is a $(\mathrm{cap},\mathrm{even\ hole})$-free graph with no odd hole of length at most $2q-1$, then
\[
        \chi_f(G)\le \frac{2q+1}{2q}\omega(G).
\]
\end{corollary}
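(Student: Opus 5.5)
The plan is to derive the fractional bound from Theorem~\ref{thm:main} and the known cases $q\le 3$, using lexicographic blow-ups by cliques. Two standard facts drive the argument. First, $\chi_f(G)=\lim_{k\to\infty}\chi(G[K_k])/k$, where $G[K_k]$ replaces each vertex of $G$ by a clique of size $k$ and each edge by a complete join. Indeed $\chi(G[K_k])$ is the $k$-fold chromatic number $\chi_k(G)$, and $\chi_k(G)/k\to\chi_f(G)$. Second, $\omega(G[K_k])=k\,\omega(G)$. So it suffices to show that $G[K_k]$ again lies in the class covered by Theorem~\ref{thm:main} (or by the Chen--Xu--Xu results for $q\le 3$). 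Then
\[
\chi(G[K_k])\le \ceil{\frac{2q+1}{2q}k\,\omega(G)}\le \frac{2q+1}{2q}k\,\omega(G)+1,
\]
and dividing by $k$ and letting $k\to\infty$ gives $\chi_f(G)\le \frac{2q+1}{2q}\omega(G)$.

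The main step is to check that blowing up by cliques preserves the hypotheses: no cap, no even hole, and no odd hole of length at most $2q-1$.

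Let $H$ be an induced hole in $G[K_k]$. Two vertices in the same clique (copies of one vertex $v$) are adjacent twins, so they have the same neighbourhood apart from each other. A hole of length at least $4$ cannot contain two adjacent twins $x,y$. If it did, the other hole-neighbour $z$ of $x$ would be adjacent to $y$, which creates a chord or a triangle unless the hole is a triangle. Hence the vertices of $H$ come from distinct vertices of $G$. Since adjacency between distinct copies mirrors adjacency in $G$, the projection of $H$ is an induced hole of the same length in $G$. Therefore $G[K_k]$ has no even hole and no odd hole of length at most $2q-1$.

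For a cap, take a hole $H$ and a vertex $u$ adjacent to exactly two consecutive vertices of $H$. The projection of $H$ is a hole in $G$. If $u$ is a copy of a vertex $w$ not appearing in $H$, then the projection gives a cap in $G$, which is a contradiction. If instead $u$ is a twin of some vertex $h\in H$, then $u$ is adjacent to $h$ and to both hole-neighbours of $h$, so it has three consecutive neighbours on $H$. That is not a cap configuration. Thus $G[K_k]$ is cap-free.

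For the case $q=2$ we use the known bound $\chi\le\ceil{5\omega/4}$, and for $q=3$ the bound $\ceil{7\omega/6}$, both from Chen, Xu and Xu. For $q\ge 3$ we use Theorem~\ref{thm:main}. The only delicate point is the twin argument above, and it is routine. I do not expect a real obstacle here: the substance of the corollary lies entirely in Theorem~\ref{thm:main}.
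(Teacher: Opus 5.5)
Your proof is correct and follows the same route as the paper: blow each vertex up into a clique, apply Theorem~\ref{thm:main} (and the Chen--Xu--Xu bounds for $q\le 3$) to $G[K_k]$, and divide by $k$. The paper instead takes $k=2q$, so that $\ceil{\frac{2q+1}{2q}\cdot 2q\,\omega(G)}=(2q+1)\omega(G)$ exactly and only $\chi_f(G)\le\chi(G[K_k])/k$ is needed, whereas you absorb the ceiling by letting $k\to\infty$; your twin argument showing that $G[K_k]$ stays cap-free, even-hole-free and free of short odd holes supplies a step the paper leaves implicit.
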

\begin{proof}
     By Theorem \ref{thm:main} and the result in \cite{ChenXuXu}, 
    $\chi(G[K_{2q}]) \le \ceil{\frac{2q+1}{2q}\omega(G[K_{2q}])} =  (2q+1) \omega(G)$. Therefore 
    $\chi_f(G) \le \frac{\chi(G[K_{2q}])}{2q} \le \frac{2q+1}{2q}\omega(G)$.
\end{proof}

\section{Proof of the theorem}

A {\em clique cutset} in a graph $G$ is a clique $K$ such that $G-K$ has more connected components than $G$.  A clique $K$ is {\em  universal}  if all vertices in $K$ are universal, i.e., adjacent to every other vertex.  

Assume $G$ is a graph and   $x,y,z$ are three consecutive vertices of a hole in $G$.  By {\em adding an ear} to $G$ with $x,z$ as its attachment, we mean adding a $x$-$z$-path $P$  whose internal vertices are disjoint from $V(G)$ and adding edges joining $y$ to some vertices of $P$.   A wheel $W$ is a graph obtained from a cycle $C$ by adding a vertex $v$ adjacent to at least three vertices of $C$. The added vertex is called the {\em center} of $W$, and the cycle $C$ is called the {\em ring} of $W$.  The ear addition is called {\em good}  if the following   hold: 

\begin{itemize}
    \item $y$ has an odd number of neighbors on $V(P)$.
    \item There is no wheel $W$ in $G$ with $v$ as its center, $x,y,z $ on the ring and $vy\in E(G)$.
    \item There is no wheel $W$ in $G$ with $y$ as its center, and  both $x$ and $z$ are neighbors of $y$ in $W$.
\end{itemize}

Let $G$ be a graph, and let $f$ be an assignment of integers 0 and 1 to its edges. A subgraph $H$
of $G$ is said to be odd (resp. even) if $\sum_{e \in E(H)} f(e)$ is odd (resp. even). The graph $G$ is said to
be odd-signable if it has  an assignment $f: E(G) \to \{0,1\}$ such that every induced cycle is odd.
Note that
every even hole-free graph $G$ is odd-signable by assigning each edge of $G$ with integer 1.
We shall use the following two  lemmas.

\begin{lemma}\label{lem:good-ear}\cite{Conforti2000}
Let $F$ be a triangle-free graph with at least three vertices.  Suppose $F$ is not the cube and does not have a clique cutset.  Then $F$ is odd-signable if and only if it can be obtained, starting from a hole, by a sequence of good ear additions.
\end{lemma}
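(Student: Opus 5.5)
The plan is to reduce odd-signability to a forbidden-configuration characterization and then argue both directions of Lemma~\ref{lem:good-ear} through it. First I would invoke Truemper's theorem specialised to triangle-free graphs. An assignment making every hole odd exists if and only if $F$ contains no induced \emph{theta} and no \emph{even wheel}. A theta is two nonadjacent vertices joined by three internally disjoint induced paths with no edges between their interiors. An even wheel is a wheel whose center has an even number of neighbours on the ring.

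The necessity of these conditions is a parity count. In a theta the three holes together use every edge exactly twice, so they cannot all be odd. In a wheel the sectors and the ring together use every edge twice, so the number of sectors must be odd if every hole is to be odd. Sufficiency is Truemper's theorem, which I would take as known. From here on, "odd-signable" means "no theta, no even wheel".

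For the "if" direction I would induct on the number of ear additions; a hole is trivially odd-signable. Suppose $F'$ is obtained from an odd-signable $F$ by a good ear $P$ with attachments $x,z$ and middle vertex $y$. I would take any theta or even wheel $H$ in $F'$ and show it lies inside $F$. If $H$ does not, it must use interior vertices of $P$. Since those vertices see only $x$, $z$, $y$ and their neighbours on $P$, $H$ contains all of $P$, or it contains a subpath of $P$ closed up through $y$. In the first case $P$ can be replaced by the path $x y z$, producing a theta or wheel in $F$ that passes through $x,y,z$. The second and third goodness conditions forbid exactly the wheels that could arise this way, centred at a neighbour of $y$ or at $y$ itself, and thetas reduce to such wheels. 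In the second case the first condition applies: $y$ has an odd number of neighbours on $P$, so the wheel with center $y$ and ring $P+xyz$ has an odd number of spokes, and a theta through $y$ and two of its neighbours on $P$ would violate triangle-freeness together with the parity count.

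For the "only if" direction, assume $F$ is triangle-free, odd-signable, not the cube, and has no clique cutset. Since $F$ is triangle-free, a clique cutset would be a vertex or an edge, so $F$ is $2$-connected and has no edge cutset; in particular it contains a hole. Let $H$ be a maximal induced subgraph obtainable from a hole by good ear additions, and suppose $H\ne F$. Because there is no vertex or edge cutset, some component $C$ of $F-V(H)$ has attachments in $H$ not contained in a single clique. I would choose a minimal induced path $Q$ through $C$ joining two attachment vertices, and analyse the neighbours in $H$ of the interior of $Q$. Using the absence of thetas and even wheels, I would show these neighbours are confined to three consecutive vertices $x,y,z$ of a hole of $H$, with $Q$ running from $x$ to $z$ and $y$ seeing an odd number of vertices of $Q$. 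Otherwise one gets a theta, or a wheel whose spoke count can be made even by rerouting.

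The main obstacle is checking that the resulting ear is \emph{good}, that is, verifying the two wheel conditions. These require a global argument over the ear structure of $H$. The natural approach is to pick $Q$, and the hole containing $x,y,z$, to minimise $|V(Q)|$. A violating wheel in $H$ would then combine with $Q$ into a theta or an even wheel in $F$. The one configuration where this fails to give a contradiction is precisely the cube, which is why it is excluded, and isolating it is where I expect most of the case analysis to lie.
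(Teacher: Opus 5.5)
The paper gives no proof of this lemma; it is quoted from \cite{Conforti2000}. Your outline therefore has to stand on its own, and it does not. The reduction to ``no theta and no even wheel'' via Truemper's theorem is fine for triangle-free graphs. However, the decisive step of the ``only if'' direction is false as stated. You assert that the absence of thetas and even wheels forces the attachments in $H$ of a minimal path $Q$ through a component of $F-V(H)$ onto three consecutive vertices $x,y,z$ of a hole of $H$.

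Here is a counterexample. Let $F$ be a $9$-hole $h_1h_2\cdots h_9$ plus a vertex $v$ adjacent to $h_1,h_4,h_7$. Every hole of $F$ has length $5$ or $9$, so $F$ is odd-signable. It is also triangle-free, has no clique cutset, and is not the cube. Take $H$ to be the $9$-hole. Then $v$ attaches at three pairwise distant vertices, and no ear can be added to $H$ at all. The only possible ear is $xvz$, and $v$ cannot be adjacent to $y$ without forming a triangle with $x$, so $N(v)\cap V(H)$ would have to be $\{x,z\}$. Yet $F$ is obtainable: start from the $5$-hole $vh_1h_2h_3h_4$ and add the good ear $h_4h_5\cdots h_9h_1$ with $y=v$.

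So an ear relative to $H$ need not exist, even though $F$ has no theta or even wheel. Maximality of $H$ can only be exploited by rebuilding the ear sequence from a different starting hole, and your local analysis has no mechanism for that. One natural alternative is to argue top-down: find an ear $P$ of $F$ such that $F-P^\circ$ still has no clique cutset and is not the cube, and induct. This existence step is also where the cube genuinely enters. The cube minus a vertex is obtained from a $4$-hole by one good ear, and the remaining vertex has three attachments.

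Conversely, what you flag as the main obstacle is routine. Suppose $Q$ is an induced ear whose interior sees $H$ only in $y$ and in $x,z$ (at its ends), and $H_0$ is a hole of $H$ through $x,y,z$. Then goodness follows directly from $F$ having no theta or even wheel:
\begin{itemize}
\item A wheel $(R,v)$ as in the second condition loses exactly the spoke $vy$ on the hole $(R-y)\cup Q$, producing a theta or an even wheel.
\item A wheel $(R,y)$ as in the third condition gives the theta $R\cup Q$.
\item The first condition is the parity of the wheel $\bigl((H_0-y)\cup Q,\,y\bigr)$.
\end{itemize}
So the difficulty, and the cube, lie entirely in the existence of the ear, which you do not establish.

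The ``if'' direction is also incomplete. Replacing $P$ by $xyz$ turns a theta or wheel $T$ of $F'$ into one of $F$ only when $y\notin T$ and $y$ has no neighbours in $T$ besides $x,z$. For example, a wheel of $F'$ centred at $y$ whose ring meets $P^\circ$ has ring $P\cup Q$ for an arbitrary induced $x$--$z$ path $Q$ of $F-y$. It has $|N(y)\cap V(P)|+|N(y)\cap Q^\circ|$ spokes. The first condition controls only the first term, and you only treat $Q=H_0-y$. Excluding an odd number of neighbours of $y$ in $Q^\circ$, and the analogous thetas, needs a genuine case analysis using the second and third conditions, which is missing.
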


\begin{lemma}\label{lem:structure}\cite{Cameron2018}
Let $G$ be a $(\mathrm{cap},4\text{-hole})$-free graph that contains a hole and does not have a clique cutset.  Let $F$ be a maximal triangle-free induced subgraph of $G$ with at least three vertices and without clique cutset.  Then $G$ is obtained from $F$ by first blowing up vertices of $F$ into nonempty cliques and then adding a universal clique.
\end{lemma}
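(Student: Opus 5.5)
The plan is to classify every vertex outside $F$ by its neighbourhood in $F$. Then I would show that the resulting classes fit together into the claimed ``blow-up plus universal clique'' structure. Throughout I would use only the fact that $G$ is cap-free and $4$-hole-free, together with the maximality of $F$.

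First I would record basic facts about $F$. Since $F$ is triangle-free, has at least three vertices and has no clique cutset, every vertex of $F$ has at least two neighbours in $F$, and these are pairwise nonadjacent. Moreover, every vertex and every edge of $F$ lies on a hole of $F$. For $v\in V(G)\setminus V(F)$ with $N(v)\cap V(F)\neq\emptyset$, I would prove a \emph{dichotomy}:
\begin{itemize}
\item either $N(v)\cap V(F)=V(F)$;
\item or there is a unique $u\in V(F)$ with $N(v)\cap V(F)=N_F[u]$, in which case I call $v$ a \emph{clone} of $u$.
\end{itemize}
To prove this I would study $N(v)$ on holes $H$ of $F$. Cap-freeness forbids $v$ from having exactly two consecutive neighbours on $H$. $4$-hole-freeness forbids two nonadjacent neighbours at distance two along $H$ together with a nonneighbour between them, unless further neighbours are present. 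Running over all holes through a given edge, this should force the neighbours of $v$ on each hole to be either one vertex together with its two hole-neighbours, or the whole hole. If $v$ is \emph{not} of one of the two types, then $F+v$ is triangle-free: a neighbour set with no edge, or with edges only in a clone-like pattern, leads to a cap. Moreover $F+v$ has no clique cutset, since $v$ has at least two neighbours attached to a $2$-connected structure. This contradicts the maximality of $F$.

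Second, I would glue the classes.
\begin{itemize}
\item Clones of the same $u$ (including $u$ itself) are pairwise adjacent. Otherwise two nonadjacent ones, together with two nonadjacent neighbours of $u$, induce a $C_4$.
\item Clones of adjacent $u,w$ are adjacent. Otherwise, a hole through $uw$ with $u,w$ replaced by the clones yields a $C_4$ or a cap.
\item Clones of nonadjacent vertices are nonadjacent, by triangle-freeness of $F$ and cap-freeness.
\item The universal-to-$F$ vertices are pairwise adjacent. Two nonadjacent ones, plus two nonadjacent vertices of $F$, would form a $C_4$.
\item Each universal-to-$F$ vertex is adjacent to every clone, by the same $C_4$ argument using a clone $x$ of $u$ and two neighbours of $u$.
\end{itemize}
This gives the blow-up of $F$ into cliques, with a clique $U$ of vertices complete to it.

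Finally, I would rule out vertices with no neighbour in $F$, and show that $U$ is universal in $G$; I expect this to be \emph{the main obstacle}. Take a component $D$ of $G$ minus the blow-up and $U$, and let $A$ be its set of attachments. Since $G$ has no clique cutset, $A$ is not a clique. So there is a shortest path through $D$ between two nonadjacent attachments, whose ends I would project to vertices of $F$. I would first try to show that this path, together with $F$, gives an ear whose addition keeps the graph triangle-free and without clique cutset, contradicting maximality. If the projected ends coincide or are adjacent, I would instead derive a cap or $C_4$ with a suitable hole of $F$. The delicate part is controlling chords to clones and to $U$ along the path, and I would handle these by minimality of the path and the adjacency rules established in the second step.
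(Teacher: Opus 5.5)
The paper does not prove this lemma; it is quoted from Cameron et al.\ (going back to Conforti--Cornu\'ejols--Kapoor--Vu\v{s}kovi\'c). So I judge your sketch on its own terms. It has genuine gaps in the first step and in the last step.

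\textbf{First step.} You state the dichotomy for every $v\notin V(F)$ with a neighbour in $F$. This is too strong. A vertex $v$ with exactly one neighbour $u$ in $F$ creates no cap and no $4$-hole with $F$, and $F+v$ has the clique cutset $\{u\}$. So neither your local arguments nor maximality can exclude $v$. It can only be excluded through the hypothesis that $G$ has no clique cutset, i.e.\ in your last step. But that step, as written, only treats vertices with \emph{no} neighbour in $F$.

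Now take $v$ whose $F$-neighbourhood $X$ contains an edge. The per-hole analysis is fine: on a hole through an edge of $X$, $X$ is either three consecutive vertices or the whole hole. Also, by $C_4$-freeness, any vertex of $F$ with two neighbours in $X$ lies in $X$. But the passage to $X\in\{V(F),N_F[u]\}$ is the real content, and you only assert that running over holes ``should force'' it. This does not follow automatically, because a two-edge induced path of $F$ need not lie on any hole of $F$. For example, take a $12$-cycle plus a vertex adjacent to every third vertex. This graph is triangle-free, $C_4$-free and has no clique cutset, yet the path through the extra vertex between two antipodal attachments lies on no hole. You need an actual argument here, for instance via components of $F-X$ and shortest paths between nonadjacent attachments of such a component.

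\textbf{Last step.} This cannot work as designed. Under your classification, $D$ is anticomplete to $F$. So two nonadjacent attachments $a_1,a_2$ are clones of distinct nonadjacent vertices $u_1,u_2$ of $F$, and your ``coincide or adjacent'' case is vacuous. Projecting the ends to $u_1,u_2$ destroys the path, since $u_i$ has no neighbour in $D$. Keeping $a_i$ instead creates triangles $a_iu_ic$ with $c\in N_F(u_i)$. Either way no triangle-free supergraph of $F$ arises, so maximality is never contradicted.

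The missing idea is an exchange:
\begin{itemize}
\item $F'=F-\{u_1,u_2\}+\{a_1,a_2\}$ is an isomorphic copy of $F$.
\item Close your path $P$ with a shortest $a_2$--$a_1$ path $Q$ in $F'$. The result is a hole, because the interior of $P$ has no neighbour in $V(F)$.
\item On this hole, $u_1$ is adjacent to exactly $a_1$ and the neighbour of $a_1$ on $Q$. That is a cap.
\end{itemize}
Maximality, via an ear added to $F$ itself, is needed only when both attachments can be chosen in $F$. That happens once the one-neighbour vertices are put into $D$. At that point the interior of $P$ may have neighbours in $F$, among the clones and in $U$. The chord analysis you defer is exactly what still has to be carried out.
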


\noindent
{\bf Proof of   Theorem~\ref{thm:main}}.  Suppose that the theorem is false, and $G$ is a counterexample  with $|V(G)|$ minimum.  Put
\[
        K=\ceil{\frac{2q+1}{2q}\omega(G)}.
\]
Then $\chi(G)>K$.

By the minimality of $G$, the graph $G$ is connected and is not perfect.  Moreover, $G$ has no clique cutset $Z$.  Indeed, if $G$ has a clique cutset $C$ and $G_1, \ldots, G_k$ are the components of $G-Z$, then the subgraphs $G'_i=G_i \cup Z$ of $G$  can be colored with $K$ colors by the minimality of $G$. The  colorings can be made to agree on the clique $C$ by permuting colors on the subgraphs $G'_i$, and hence their union is a $K$-coloring of $G$, a contradiction.  

The graph $G$ also has no non-empty universal clique.  If $U$ were such a clique, then $\chi(G)=|U|+\chi(G-U)$ and $\omega(G)=|U|+\omega(G-U)$; the minimality of $G-U$ would give
\[
        \chi(G)\le |U|+\ceil{\frac{2q+1}{2q}\omega(G-U)}
        \le \ceil{\frac{2q+1}{2q}\omega(G)},
\]
a contradiction.

Since $G$ is not perfect and even-hole-free, it contains an odd hole.  Choose a maximal triangle-free induced subgraph $F$ of $G$ with no clique cutset.  By Lemma~\ref{lem:structure},   $G$ is a clique blowup of $F$.
We call $F$ the {\em skeleton} of $G$.   Since $G$ is even-hole-free and has no odd hole of length at most $2q-1$,    $F$ is also even-hole-free  and has no odd hole of length at most $2q-1$.

   Let $w(v)$ be the size of the clique replacing $v\in V(F)$.  Since $F$ is triangle-free, the clique number of $G$ is
\[
        W=\omega_w(F)=\max\{w(u)+w(v):uv\in E(F)\}.
\]

\begin{definition}
    \label{def-col}
    Assume $F$ is a graph, $w$ is a weight assignment that assigns to each vertex $v$ of $F$ a positive integer $w(v)$, and $K$ a positive integer. A $K$-coloring of $(F,w)$ is a mapping $\phi$ that assigns to each vertex $v$ of $F$ a subset $\phi(v)$ of $[K]$  such that $|\phi(v)|=w(v)$ and $\phi(u)\cap \phi(v)=\varnothing$ whenever $uv\in E(F)$.
\end{definition}

To prove Theorem \ref{thm:main},   it suffices to prove the following lemma.

\begin{lemma}
    \label{lem:bounded}
Let $q \ge 3$ and $F$ be a triangle-free even-hole-free graph with no odd hole of length at most $2q-1$ and with no clique cut.  Let $w:V(F)\to \mathbb Z_{>0}$ be a weight function,   $W=\omega_w(F)$ and $ K=\ceil{\frac{2q+1}{2q}W}$. 
Then $(F,w)$ has a $K$-coloring.
\end{lemma}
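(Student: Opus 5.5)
We argue by induction along the ear decomposition given by Lemma~\ref{lem:good-ear}. Since $F$ is triangle-free, even-hole-free (hence odd-signable with all edges labelled $1$), and has no clique cutset, $F$ is either the cube or is built from a hole by a sequence of good ear additions. The cube contains $4$-holes, so it is excluded. If $F$ has at most two vertices the statement is trivial. Otherwise we write $F = F_0 \cup P_1 \cup \dots \cup P_t$, where $F_0$ is an odd hole of length at least $2q+1$. Each intermediate graph $F_i$ is an induced subgraph of $F$, so it inherits all the hypotheses except possibly the absence of a clique cutset. For this reason we prove a stronger, inductive statement: every $(F_i, w|_{F_i})$ has a $K$-coloring with respect to the same global $K=\ceil{\frac{2q+1}{2q}W}$, and this coloring extends.

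\textbf{Base case.} Let $C$ be an odd hole $v_0\dots v_{n-1}$ with $n\ge 2q+1$, where consecutive weights sum to at most $W$. We color $C$ cyclically, giving each $v_i$ an interval of $w(v_i)$ consecutive colors modulo $K$, placed right after the interval of $v_{i-1}$. This works as long as the wrap-around does not collide. The standard fractional bound for odd cycles, $\chi_f(C_n,w)=\max(W,\ 2\sum w/(n-1))$, gives $\sum_i w(v_i)\le nW/2$, which is at most $\frac{2q+1}{2}W$ when summed in pairs. To obtain an integral coloring we pick a vertex $v_j$ of minimum weight, so $w(v_j)\le W/2$. We then 2-color the path $C-v_j$ using the color intervals $[1,a]$ and $[a+1,W]$ alternately, and give $v_j$ the colors missing from its two neighbours. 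For this we need $K-|\phi(v_{j-1})\cup\phi(v_{j+1})|\ge w(v_j)$. When the weights are unbalanced, we handle this by shifting the parity of the split along the path: we choose the position along the path where the alternation "slips" by one so that the two neighbours of $v_j$ overlap as much as possible. The slack available is $K-W\ge W/(2q)$, and the path has length at least $2q$, so the slack can be spread over $2q$ edges. This is the reason the bound holds exactly for holes of length at least $2q+1$.

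\textbf{Ear step.} Suppose $F_{i+1}=F_i\cup P$, where $P$ is an $x$-$z$ path attached with middle vertex $y$, and $y$ has an odd number of neighbours on $P$. Triangle-freeness forces these neighbours to be pairwise non-adjacent and non-adjacent to $x$ and $z$. They split $P$ into segments $x=p_0,\dots,y$-neighbour$,\dots,z$. Together with $y$, each segment gives a hole through $y$, so by hypothesis each such hole has odd length at least $2q+1$. We extend the fixed coloring of $F_i$ to the interior of $P$ segment by segment. Each segment $Q$, together with $y$, is a path whose endpoints are already colored: $y$ is colored, and the segment's ends are either $x$, $z$, or neighbours of $y$, which must avoid $\phi(y)$. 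The task is to list-extend a weighted path with prescribed end colorings. The key sublemma is the following. Given a weighted path $u_0u_1\dots u_m$ with $m\ge 2q-1$, where edge weights sum to at most $W$, and with arbitrary valid colorings of $u_0$ and $u_m$ from $[K]$, the interior can be colored. Its proof is the same sliding-interval argument as in the base case: each interior step can rotate the color set by up to $K-W\ge \lceil W/(2q)\rceil$ positions, and $2q$ steps suffice to reach an arbitrary target configuration. The goodness conditions are what guarantee that the relevant segments are long. The wheel-free conditions ensure that $y$'s neighbours on $P$ cannot lie close to $x$ or $z$ in a way that would create short segments, and parity rules out even holes.

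\textbf{Main obstacle.} The hardest step is the path-extension sublemma with two arbitrary, possibly interleaved, end colorings, where the endpoint sets are not intervals. I would first try to reduce to interval colorings by choosing an appropriate permutation of colors. When ends have unequal weights, I would use a counting argument in which each interior vertex $u_k$ receives its colors from $[K]\setminus\phi(u_{k-1})$, greedily preferring colors in $\phi(u_m)$'s complement as $k$ approaches $m$. The amount of "progress" per step should then be at least $K-W$, and this needs to be verified carefully.
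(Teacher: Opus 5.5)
\section*{Review of the proof proposal}

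The gap is in your ear step: the key sublemma is false, so the extension cannot be done with arbitrary end colourings.

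\textbf{Counterexample to the sublemma.} Take $q=3$ and $W=6$, so that $K=7$. Consider the path $u_0u_1\cdots u_5$ of length $5=2q-1$ with all weights $3$, precoloured by $\phi(u_0)=\phi(u_5)=\{1,2,3\}$. Since $u_4$ must avoid $\phi(u_5)=\phi(u_0)$, any extension would be a $7$-colouring of the weighted $5$-cycle $u_0u_1u_2u_3u_4$ with all weights $3$. Every colour class of $C_5$ has at most two vertices, so this needs at least $15/2>7$ colours.

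\textbf{Why the heuristic fails.} Your claim that $2q$ steps of size $K-W$ reach any target configuration ignores parity. The ends of an odd-length path lie on opposite sides of the bipartition. The colour set can therefore drift by only about $K-W$ per \emph{pair} of interior vertices. Over a segment with $2q$ vertices this is about $(q-1)(K-W)\approx\frac{q-1}{2q}W$, while the two end sets may overlap in about $W/2$ colours.

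\textbf{Where this breaks your ear step.} Suppose $y$ has neighbours $x,a,z$ on $P$, the segments $xPa$ and $aPz$ have length $5$, and all weights are $3$. Your procedure permits $\phi(a)=\phi(x)$, since that set avoids $\phi(y)$. Then $xPa$ cannot be completed. No greedy or permutation trick can rescue a statement that is false.

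\textbf{What is missing.} You need a quantitative extension criterion together with a \emph{joint} choice of the colour set of $a$.
\begin{itemize}
\item The paper shows that a precolouring $A_1,A_{2m}$ of the ends of $v_1\cdots v_{2m}$ extends whenever $|A_1\cap A_{2m}|\le (m-1)K-S$, where $S$ is the interior weight.
\item It then picks $\phi(a)=A\subseteq[K]\setminus Y$ satisfying both $|A\cap X|\le (m-1)K-S_L$ and $|A\cap Z|\le (n-1)K-S_R$. This is done by making $A$ avoid a set $H$ that contains at least $r_L$ colours of $X$ and at least $r_R$ colours of $Z$.
\item Such an $H$ exists provided $K-w(y)-w(a)\ge\max\{r_L,r_R,r_L+r_R-|X\cap Z|\}$.
\end{itemize}
Verifying this inequality uses three ingredients:
\begin{itemize}
\item the weight bound on each of the two odd holes through $y$;
\item the estimate $|X\cap Z|\ge |X|+|Z|+|Y|-K$, which holds because $X$ and $Z$ both avoid $Y$;
\item in the critical case $r_L,r_R>0$, the inequality $w(a)\le 2(q-1)(K-W)$, which is exactly where $q\ge 3$ is needed.
\end{itemize}
Your argument never uses $q\ge 3$, which is a symptom of this missing step.

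\textbf{Base case.} Your base case is only sketched. The bound $\sum_i w(v_i)\le \frac{n}{2}W\le\frac{2q+1}{2}W$ is false for $n>2q+1$. What you actually need is $\sum_i w(v_i)\le\frac{n-1}{2}K$, which follows from $\sum_i w(v_i)\le\frac n2 W$ and $n\ge 2q+1$.
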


Assume Lemma~\ref{lem:bounded} is not true and $F$   is a counterexample with minimum number of vertices. 

If $F=(v_0,v_1, \ldots, v_{2r})$ is an odd cycle of length $2r+1$ with $r\ge q$, then by symmetry, we may assume that $w(v_{2r}) \le W/2$. For $i=0,1, \ldots, r-1$, let 
  \begin{eqnarray*}
      \phi(v_{2i}) &=& (iW, iW+w(v_{2i})] \pmod{K}, \\
      \phi(v_{2i+1}) &=& (iW+w(v_{2i}), iW+w(v_{2i})+w(v_{2i+1})] \pmod{K},
  \end{eqnarray*}  
  where $(a, b] \pmod{K} = \{c: c \in \{1, 2,\ldots, K\}, \exists c', a < c' \le b, c' \equiv c \pmod{K}\}$. Let $\phi(v_{2r}) = (rW, rW+w(v_{2r})] \pmod{K}$.
Then $|\phi(v_i)| = w(v_i)$ and  $\phi(v_i) \cap \phi(v_{i+1}) = \emptyset$ for $i=0,1, \ldots, 2r$ (addition in the indices modulo $2r+1$). So $\phi$ is a $K$-coloring of $(F,w)$. 

Assume that  $F$ is not an odd cycle.
Since $F$ is even-hole-free, it is odd-signable. Also $F$ is not the cube. By Lemma~\ref{lem:good-ear},  $F$ is obtained from a smaller even-hole-free graph 
$F'$ by adding a good ear  
\[
        P=x_0x_1\cdots x_s.
\]
Let $y$ be the vertex of an old hole in $F'$ such that $x_0,y,x_s$ are consecutive vertices of the odd hole and   $y$ has an odd number of neighbors in $P$.

Choose three consecutive neighbors $x,a ,z$ of $y$ in $P$ in this order. 
Thus  $y$ has no neighbor in the interior of either segment $xPa$ or $aPz$, and  $y+xPa+y$ and $y+aPz+y$ are induced holes.  Since $F$ has no even hole and no odd hole of length at most $2q-1$, these two holes are odd and have length at least $2q+1$.  Consequently, the two paths $xPa$ and $aPz$ have odd lengths at least $2q-1$.

Let $$xPa = v_1v_2 \ldots v_{2m}, \, zPa=u_1u_2\ldots u_{2n}$$
be the segments of $P$ from $x$ to $a$ and from $z$ to $a$, respectively.
(So $v_1=x, u_1=z$ and $v_{2m}=u_{2n}=a$.) Then $m,n \ge q$. 

Let $B = \{v_2,v_3, \ldots, v_{2m}, u_2, u_3, \ldots, u_{2n-1}\}$, and $F^-=F-B$.   By the minimality of $F$, there is a $K$-coloring $\phi$ of $(F^-, w)$.  

We shall show that $\phi$ can be extended to a $K$-coloring of $(F, w)$.

Let $X=\phi(x), Y=\phi(y), Z= \phi(z)$.  We need to  find a subset $A$ of $[K]-\phi(y)$ of size $w(a)$, so that $(xPa, w)$ has a $[K]$-coloring $\psi$ with $\psi(v_1)=X$, $\psi(v_{2m})=A$, and  $(zPa, w)$ has a $[K]$-coloring $\psi$ with $\psi(u_1)=Z$, $\psi(u_{2n}) = A$. 

The next lemma gives a sufficient condition that the pre-coloring of the two end vertices can be extended to a $[K]$-coloring of the whole path.

\begin{lemma}\label{lem:path}
Let
\[
        P=v_1v_2\cdots v_{2m}
\]
be a path and $w$ is a weight assignment that assigns to each vertex $v_i$ a positive integer $w(v_i)$.
Suppose that for each \(i\), 
\[
        w(v_i)+w(v_{i+1}) \le W,
\]
and \(K\ge W\).   Let 
\[
        S=\sum_{i=2}^{2m-1}w(v_i).
\]
If $A_1$ is a $w(v_1)$-subset of $[K]$, $A_{2m}$ is a $w(v_{2m})$-subset of $[K]$ such that $$  |A_{2m} \cap A_1|\le (m-1)K-S,$$ then there is a \(K\)-coloring $\phi$ of $(P, w)$ such that $\phi(v_1)=A_1$ and $\phi(v_{2m})=A_{2m}$.
\end{lemma}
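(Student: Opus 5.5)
Induction on $m$ is the natural route. For $m=1$ the path is the single edge $v_1v_2$, so $S=0$ and the hypothesis reads $|A_2\cap A_1|\le 0$. The two sets are disjoint, and $\phi(v_1)=A_1$, $\phi(v_2)=A_2$ is already a $K$-coloring.

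For $m\ge 2$, I would color $v_2$ and $v_3$ first, so as to reduce to the path $v_3v_4\cdots v_{2m}$. That path has $2(m-1)$ vertices and interior sum $S'=S-w(v_2)-w(v_3)$. I need a $w(v_2)$-set $A_2\subseteq [K]\setminus A_1$ and a $w(v_3)$-set $A_3$ with $A_3\cap A_2=\varnothing$ and
\[
|A_3\cap A_{2m}|\le (m-2)K-S' = |A_{2m}\cap A_1|\text{-budget} - \bigl(K-w(v_2)-w(v_3)\bigr).
\]
So, writing $t=(m-1)K-S\ge |A_1\cap A_{2m}|$, I want $|A_3\cap A_{2m}|\le t-K+w(v_2)+w(v_3)$. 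The induction hypothesis then finishes. (If the right-hand side exceeds $w(v_3)$ the condition is vacuous.)

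The construction is greedy, aimed at making $A_3$ avoid $A_{2m}$ as much as possible. Choose $A_2$ inside $[K]\setminus A_1$, preferring colors in $A_{2m}\setminus A_1$. Then choose $A_3$ inside $[K]\setminus A_2$, preferring colors outside $A_{2m}$. Let $a=|A_1|$, $b=|A_{2m}|$ and $c=|A_1\cap A_{2m}|\le t$.
\begin{itemize}
\item Case $|A_{2m}\setminus A_1|\ge w(v_2)$. Then $A_2\subseteq A_{2m}$, and the colors outside $A_2\cup A_{2m}$ number $K-b$. So $|A_3\cap A_{2m}|\le \max(0,\,w(v_3)-(K-b))$. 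It remains to check $w(v_3)-K+b\le t-K+w(v_2)+w(v_3)$, i.e.\ $b\le t+w(v_2)$.
\item Case $|A_{2m}\setminus A_1|< w(v_2)$. Then $A_2\supseteq A_{2m}\setminus A_1$, and $A_3$ can be chosen to meet $A_{2m}$ only inside $A_1\cap A_{2m}$, giving $|A_3\cap A_{2m}|\le c\le t$. I would then check $c\le t-K+w(v_2)+w(v_3)$, or else refine the choice of $A_3$.
\end{itemize}
Both cases also require $w(v_2)\le K-a$, which holds since $a+w(v_2)\le W\le K$.

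The main obstacle is that the reduction must always succeed, and the two case inequalities are not immediate. In fact they can fail. If $t$ is much smaller than $K$ and $S$ is tiny, a one-step reduction loses the budget $K-w(v_2)-w(v_3)$ all at once. The remedy I would try is to strengthen the induction hypothesis by capping the requirement: the necessary quantity is $\max\bigl(0,\ w(v_1)+w(v_{2m})-\ldots\bigr)$. Concretely, I would only demand $|A_3\cap A_{2m}|\le \max\{0,\ \min(w(v_3),\,t')\}$, where $t'$ is the new budget, and use that for $m\ge2$ the budget is at least $K-\sum(\cdot)$. Alternatively, I would apply a flow or Hall-type argument: think of each color's pattern of usage along the path, note that every color used on both $v_1$ and $v_{2m}$ must be skipped at least once in the even-length stretch, and count the slack $(m-1)K-S$ as the total number of color slots free on the pairs $\{v_{2i},v_{2i+1}\}$, $1\le i\le m-1$. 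Each common color of $A_1$ and $A_{2m}$ consumes one such slot, and a greedy interval assignment realizes this. I expect the Hall-type counting to be the cleaner route to make rigorous.
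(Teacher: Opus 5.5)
\textbf{There is a genuine gap: the proposal never closes the induction and wrongly concludes the reduction can fail.} Your reduction is the right one. It is the mirror image of the paper's argument: the paper colors $v_{2m-1}$ inside $[K]\setminus A_{2m}$ to maximize overlap with $A_1$, then colors $v_{2m-2}$ to minimize overlap with $A_1$, exactly as you do with $v_2,v_3$ from the other end.

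\textbf{Case 2 is where it breaks.} You bound $|A_3\cap A_{2m}|$ by $c$, which throws away what the greedy choice of $A_3$ buys. Since $A_2\cap A_1=\varnothing$ and $A_{2m}\setminus A_1\subseteq A_2$, we have $|A_2\cup A_{2m}|=w(v_2)+c$. So $K-w(v_2)-c$ colors lie outside $A_2\cup A_{2m}$, and preferring them gives
\[
|A_3\cap A_{2m}|\le \max\{0,\ w(v_2)+w(v_3)+c-K\}\le \max\{0,\ t-K+w(v_2)+w(v_3)\}.
\]
With the crude bound $c$ the needed inequality does fail. For example, take $m=2$, $K=W=10$, weights $5,1,1,5$, and $A_1=A_4=\{1,\dots,5\}$: then $c=5$ but the target is $0$. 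Yet the greedy $A_3$ misses $A_4$ entirely. So the obstruction you found comes from the estimate, not from the construction.

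\textbf{The remaining checks are one-line pairing arguments.} They also dispose of your worry that the budget is ``lost all at once.''
\begin{itemize}
\item The new budget $t-K+w(v_2)+w(v_3)=(m-2)K-S'$ is always nonnegative. Indeed $S'=\sum_{i=4}^{2m-1}w(v_i)$ splits into $m-2$ adjacent pairs, each of weight at most $W\le K$. This handles the $0$ term in both cases.
\item Your Case 1 condition $b\le t+w(v_2)$ is equivalent to $\sum_{i=3}^{2m}w(v_i)\le (m-1)K$. This follows by pairing $(v_3,v_4),\dots,(v_{2m-1},v_{2m})$.
\end{itemize}
With these, the induction closes as stated, and no strengthened hypothesis or Hall-type/flow argument is needed. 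As written, those alternatives are only sketched and do not constitute a proof.
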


\begin{proof}
Note that $S = \sum_{j=1}^{m-1} (w(v_{2j} + w(v_{2j+1}) \le (m-1)W \le  (m-1)K$. So $(m-1)K-S \ge 0$.

The proof is by induction on $m$. If $m=1$, then $(m-1)K-S=0$. For \(A_1, A_2 \subseteq [K]\),  \(|A_i|=w(v_i)\) with $A_1 \cap A_2 = \emptyset$, $\phi(v_i)=A_i$ for $i=1,2$ is a \(K\)-coloring of $(P_2, w)$.

Assume $m \ge 2$ and the lemma holds for a path of length $2(m-1)$. Let \(A_1, A_{2m}\) be subsets of \(  [K]\) such that \(|A_1|=w(v_1)\),  \( |A_{2m}|=w(v_{2m})\) and \(|A_1\cap A_{2m}|\le (m-1)K-S\). 

Let $P'=v_1v_2\ldots v_{2m-2}$ and $S'=\sum_{i=2}^{2m-3}w(v_i)$. 

Then $[K]-A_{2m}$ has a subset $A_{2m-1}$ of size $w(v_{2m-1})$ such that 
$$|A_{2m-1} \cap A_1| \ge \min\{w(v_{2m-1}), w(v_1)-((m-1)K-S)\}.$$ This implies that 
$$|A_{2m-1} \cup A_1| =w(v_1)+w(v_{2m-1}) - |A_{2m-1} \cap A_1| \le   \max\{w(v_1), w(v_{2m-1})+((m-1)K-S)\}.$$
Then  $[K]-A_{2m-1}$ has a subset $A_{2m-2}$ of size $w(v_{2m-2})$ with
 \begin{eqnarray*}
     |A_{2m-2} \cap A_1| &\le& \max\{w(v_1)+w(v_{2m-2})-K,  w(v_{2m-2}) + w(v_{2m-1}) + (m-2)K-S \} \\  
     &\le& (m-2)K-S'.
 \end{eqnarray*} 

By the induction hypothesis, there is a $K$-coloring $\phi$ of $(P',w)$  with $\phi(v_1)=A_1$ and $\phi(v_{2m-2})=A_{2m-2}$. Then  $\phi$ can be extended further to a $K$-coloring of $(P,w)$ with $\phi(v_{2m})=A_{2m}$. 

This completes the proof of Lemma \ref{lem:path}.
\end{proof}

 Let
\[
        S_L=\sum_{i=2}^{2m-1}w(v_i), \text{ and } 
        S_R=\sum_{i=2}^{2n-1}w(u_i).
\]

\begin{definition}
    \label{def-avoidable}
    A subset $H$ of $[K]\setminus Y$ is {\em good} if $|H| \le K-w(a)-w(y)$ and 
 $$
        |H\cap X|\ge w(x)+S_L-(m-1)K \text{ and } 
        |H\cap Z|\ge w(z)+S_R-(n-1)K. \eqno{(1)}
        $$
\end{definition}

\begin{lemma}
    \label{lem-suff}
   If $[K]\setminus Y$ has a good subset, 
 then $\phi$ can be extended to a $K$-coloring of $(F, w)$.
\end{lemma}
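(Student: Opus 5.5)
Given a good subset $H\subseteq[K]\setminus Y$, we will pick the color set $A=\phi(a)$ inside the complement of $H\cup Y$, and then fill in the two paths $xPa$ and $zPa$ using Lemma~\ref{lem:path}.

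First we choose $A$. Since $H\cap Y=\varnothing$ and $|H|\le K-w(a)-w(y)$, we get $|[K]\setminus (H\cup Y)| = K-|H|-w(y)\ge w(a)$. So we can take $A$ to be any $w(a)$-subset of $[K]\setminus(H\cup Y)$. Then $A\cap Y=\varnothing$, and this handles the edge $ay$, the only edge from $a$ to $F^-$ that involves a vertex other than $v_{2m-1}$ and $u_{2n-1}$.

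Next we check the hypothesis of Lemma~\ref{lem:path} for the path $xPa=v_1\cdots v_{2m}$ with $A_1=X$ and $A_{2m}=A$. Because $A\cap H=\varnothing$, we have
\[
|A\cap X|\le |X\setminus H| = w(x)-|H\cap X|\le (m-1)K-S_L ,
\]
where the last inequality is condition (1). Every edge of $P$ is an edge of $F$, so $w(v_i)+w(v_{i+1})\le W\le K$. Lemma~\ref{lem:path} therefore gives a $K$-coloring of $(xPa,w)$ extending $X$ and $A$. The same computation with the second inequality of (1) gives a $K$-coloring of $(zPa,w)$ with $\phi(u_1)=Z$ and $\phi(u_{2n})=A$.

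Finally we combine $\phi$ on $F^-$ with these two path colorings. They agree on $x$, $z$ and $a$, so the combination is well defined. It remains to check that no edge of $F$ between $B$ and $V(F^-)$ is violated. Here we use that $P$ is an ear: its internal vertices have no neighbors in $F'$ except possibly $y$. We also use that $y$ has no neighbors in the interiors of $xPa$ and $aPz$, since $x,a,z$ are consecutive neighbors of $y$. The remaining vertices of $P$ outside $xPa\cup zPa$ lie in $F^-$, and they touch $B$ only through the path edges at $x$ and $z$. Those edges lie outside $B$'s interior, are already colored properly in $\phi$, and are not affected by the new colors. Inside $B$ there are no chords, because the segments are induced paths: $y+xPa+y$ and $y+aPz+y$ are holes. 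So the only edges to check are the path edges and $ay$, and both are handled above.

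The main obstacle is this last adjacency bookkeeping: we must confirm that no vertex of $B$ other than $a$ is adjacent to $y$ or to any other vertex of $F^-$. This follows from the definition of ear addition and the choice of consecutive neighbors. The counting itself is immediate.
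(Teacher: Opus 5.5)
Your proof is correct and is essentially the paper's argument: choose $A$ inside $[K]\setminus(Y\cup H)$, use $A\cap H=\varnothing$ and (1) to get $|A\cap X|\le (m-1)K-S_L$ and $|A\cap Z|\le (n-1)K-S_R$, then apply Lemma~\ref{lem:path} to $xPa$ and to $zPa$. Your adjacency bookkeeping is more explicit than the paper's one-line appeal; one small fix is that the absence of edges between the interiors of $xPa$ and $aPz$ comes from $P$ being a chordless ear (only path edges and edges to $y$ are added), not from the two holes $y+xPa+y$ and $y+aPz+y$ that you cite.
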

\begin{proof}
    Let $H$ be a good subset of $[K] \setminus Y$. Then   $[K] - (Y \cup H)$ has a subset $A$ of size $w(a)$. It follows from ~(1) that  
 $$ | A \cap X|\le (m-1)K-S_L \text{ and } 
 | A \cap Z|\le (n-1)K-S_R.$$
 First, we extend $\phi$ to $a$ by letting $\phi(a)=A$. By Lemma \ref{lem:path}, $\phi$ can be extended further to a $K$-coloring of $(F, w)$.
\end{proof}

It remains to prove that $[K]\setminus Y$ has  a good subset.

Let $$r_L =   w(x)+S_L-(m-1)K,  \text{ and }  r_R = w(z)+S_R-(n-1)K.$$
Let $r_L^+ \max\{0, r_L\}$ and $r_R^+ = \max\{0, r_R\}$.
\begin{lemma}\label{lem:select}
If $K-w(y)-w(a) \ge \max\{r_L, r_R, r_L+r_R-|X \cap Z| \}$, then $[K]\setminus Y$ has  a good subset.
\end{lemma}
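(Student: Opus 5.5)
We will construct the good subset $H$ explicitly inside $[K]\setminus Y$, taking as much as possible from $X\cap Z$ first, since these colours count towards both constraints in (1). The key observation is that $X$ and $Z$ are both disjoint from $Y$, because $xy,zy\in E(F)$ and $\phi$ is a $K$-coloring of $(F^-,w)$. Hence $X$, $Z$, $X\cap Z$, $X\setminus Z$ and $Z\setminus X$ all lie in $[K]\setminus Y$. We may therefore choose $H$ among subsets of $X\cup Z$, and the condition $H\subseteq[K]\setminus Y$ holds automatically.

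Write $c=|X\cap Z|$. The required intersections are $|H\cap X|\ge r_L^+$ and $|H\cap Z|\ge r_R^+$, where we replace $r_L,r_R$ by their positive parts since nonpositive requirements are vacuous. The construction proceeds in three steps.
\begin{enumerate}[label=(\roman*)]
\item Let $t=\min\{c,r_L^+,r_R^+\}$ and put $t$ elements of $X\cap Z$ into $H$.
\item Add $r_L^+-t$ elements of $X\setminus Z$ and $r_R^+-t$ elements of $Z\setminus X$.
\end{enumerate}
These elements exist. For instance, $|X\setminus Z|=w(x)-c$, and we need $r_L^+-t\le w(x)-c$. If $t=c$, this amounts to $r_L\le w(x)$, which holds because $S_L\le(m-1)W\le(m-1)K$, exactly as in the first line of the proof of Lemma~\ref{lem:path}. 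If $t<c$, then $t=r_L^+$ or $t=r_R^+$. In the first case nothing more is needed from $X$. In the second case $r_R^+<c$, so $Z$ is already satisfied, and we may instead take $\min\{c,r_L^+\}$ elements from $X\cap Z$ and the rest of $X$'s requirement from $X\setminus Z$, which again fits since $r_L^+\le w(x)$. The cleanest way to organise this is: take $s=\min\{c,\max(r_L^+,r_R^+)\}$ elements of $X\cap Z$, then top up $X$ from $X\setminus Z$ and $Z$ from $Z\setminus X$. Availability in each case follows from $r_L^+\le w(x)$ and $r_R^+\le w(z)$.

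Next we bound the size. With the choice of $s$ above, $|H|=s+(r_L^+-s)^+ + (r_R^+-s)^+$. If $c\ge\max(r_L^+,r_R^+)$, then $|H|=\max(r_L^+,r_R^+)$. Otherwise $s=c$, and $|H|=c+(r_L^+-c)+(r_R^+-c)=r_L^++r_R^+-c$, using $r_L^+,r_R^+\ge$ the amounts drawn. So in all cases $|H|\le\max\{r_L^+,r_R^+,r_L^++r_R^+-c\}$.

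The step I expect to be the main obstacle is matching this bound with the hypothesis, which is stated with $r_L,r_R$ rather than their positive parts. If both are nonnegative, the bound equals $\max\{r_L,r_R,r_L+r_R-c\}$, which is at most $K-w(y)-w(a)$ by hypothesis. If, say, $r_R<0$, then $r_R^+=0$ and $|H|\le r_L^+$. This is at most $\max\{r_L,0\}$, and we need $K-w(y)-w(a)\ge 0$, which holds since $ya\in E(F)$ gives $w(y)+w(a)\le W\le K$. The case $r_L<0$ is symmetric, and when both are negative we simply take $H=\varnothing$. Thus $H$ is good in the sense of Definition~\ref{def-avoidable}.
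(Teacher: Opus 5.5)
Your proof is correct and is essentially the paper's argument: take colours from $X\cap Z$ first, top up from $X\setminus Z$ and $Z\setminus X$ using $r_L\le w(x)$ and $r_R\le w(z)$, and split on how $|X\cap Z|$ compares with the requirements. The only organizational difference is that the paper splits on $|X\cap Z|\ge\min\{r_L,r_R\}$ while you split on $|X\cap Z|\ge\max\{r_L^+,r_R^+\}$.

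You also make explicit two points the paper leaves implicit, namely $X,Z\subseteq[K]\setminus Y$ and $K-w(y)-w(a)\ge 0$ when $r_L$ or $r_R$ is negative. One small slip: when $s=|X\cap Z|$ but one of $r_L^+,r_R^+$ is below $|X\cap Z|$, $|H|$ equals the larger of the two rather than $r_L^++r_R^+-|X\cap Z|$. This still satisfies your stated bound, so the argument is unaffected.
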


\begin{proof}
 By symmetry,  we may assume that $r_L\le r_R$.   If $|X \cap Z| \ge r_L$, choose $r_L^+$ elements from $X\cap Z$.  Add $r_R^+-r_L^+$ other elements from $Z$.  This is possible since $S_R \le (n-1)K$ and hence $r_R\le w(z)=|Z|$, and the resulting set $H$ has size $r_R\le K-w(y)-w(a)$, and therefore is a good subset of $[K]-Y$.

If $|X \cap Z| <r_L$, choose all elements of $X\cap Z$,  add $r_L-|X \cap Z|$ elements from $X\setminus Z$ and $r_R-|X \cap Z|$ elements from $Z\setminus X$.  This is possible because $r_L\le |X|$ and $r_R\le |Z|$.  The resulting set $H$ has size 
\[
        |X \cap Z| +(r_L-|X \cap Z|)+(r_R-|X \cap Z|)=r_L+r_R-|X \cap Z|\le K-w(y)-w(a).
\]
Hence $H$ is a good subset of $[K] \setminus Y$.  This proves the lemma.
\end{proof}

It remains to prove that  
$$K-w(y)-w(a) \ge \max\{r_L, r_R, r_L+r_R-|X \cap Z| \},$$

We first prove $K-w(y)-w(a) \ge r_L$. The vertices
$y,v_1,\ldots,v_{2m},y$ form an odd hole $C$ of length $2m+1$, where
$v_1=x$ and $v_{2m}=a$. 
Therefore
$$
2(w(y)+w(x)+S_L+w(a)) = \sum_{uv \in E(C)} (w(u)+w(v)) \le (2m+1)W.
$$
Since $m\ge q$ and $K\ge \frac{2q+1}{2q}W$, we have
$\frac{2m+1}{2}W\le mK$. Hence
$$r_L=w(x)+S_L-(m-1)K\le K-w(y)-w(a).$$ 

The same argument applied to the
odd hole $y,u_1,\ldots,u_{2n},y$ gives $r_R\le K-w(y)-w(a)$.

Next, we prove  $K-w(y)-w(a)  \ge r_L+r_R - |X \cap Z|$.  If $r_L \le 0$, then this follows from the result that $K-w(y)-w(a)  \ge r_R$.   
Similarly, if $r_R \le 0$, then this follows from the result that $K-w(y)-w(a)  \ge r_L$. 
  Assume $r_L, r_R > 0$. 

Let
$$
L=\min\{w(x),W-w(a)\},\qquad
R=\min\{w(z),W-w(a)\}.
$$

\begin{claim}
\label{clm-rlrr}
    $r_L \le L-(q-1)(K-W)$ and $r_R\le R-(q-1)(K-W)$. Consequently, $r_L+r_R \le L +R -2(q-1)(K-W)$.
\end{claim}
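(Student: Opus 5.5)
We bound $r_L$ by estimating the middle sum $S_L$ directly from the edge weight constraints along the path $xPa=v_1\cdots v_{2m}$. Throughout we use that every edge $uv$ of $F$ has $w(u)+w(v)\le W$, that $m\ge q$, and that $K\ge W$. The target is
\[
r_L=w(x)+S_L-(m-1)K\le L-(q-1)(K-W),\qquad L=\min\{w(x),W-w(a)\},
\]
so it suffices to prove two inequalities, one for each term of the minimum.

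First we show $r_L\le w(x)-(q-1)(K-W)$. Pair the interior vertices as $(v_2,v_3),(v_4,v_5),\dots,(v_{2m-2},v_{2m-1})$. These are $m-1$ edges, so $S_L\le (m-1)W$. Hence
\[
r_L\le w(x)-(m-1)(K-W)\le w(x)-(q-1)(K-W),
\]
where the last step uses $m\ge q$ and $K-W\ge 0$.

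Next we show $r_L\le W-w(a)-(q-1)(K-W)$. Now pair the vertices as $(v_1,v_2),(v_3,v_4),\dots,(v_{2m-1},v_{2m})$. These are $m$ edges, so
\[
w(x)+S_L+w(a)=\sum_{i=1}^{2m}w(v_i)\le mW.
\]
Therefore
\[
r_L\le mW-w(a)-(m-1)K=W-w(a)-(m-1)(K-W)\le W-w(a)-(q-1)(K-W).
\]

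Taking the minimum of the two bounds gives $r_L\le L-(q-1)(K-W)$. The argument for $r_R$ is identical, applied to the path $zPa=u_1\cdots u_{2n}$ with $n\ge q$. Adding the two inequalities yields $r_L+r_R\le L+R-2(q-1)(K-W)$.

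There is no real obstacle here. The only points needing care are choosing the two different perfect matchings of the path, one covering only the interior and one covering the whole path, and using $K\ge W$ to replace $m-1$ by $q-1$. The odd hole structure is not needed for this claim; it presumably enters later, when $L+R$ is compared with $|X\cap Z|$ and $K-w(y)-w(a)$.
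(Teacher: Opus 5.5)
Your proof is correct and is essentially the paper's argument. You bound $S_L\le (m-1)W$ by pairing the interior vertices and $w(x)+S_L+w(a)\le mW$ by pairing the whole segment, then use $m\ge q$ and $K\ge W$ to obtain the two bounds whose minimum is $L-(q-1)(K-W)$; the bound for $r_R$ follows by the same argument on $zPa$.
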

 \begin{proof}
     Pairing the
internal vertices of $xPa$ gives $S_L\le(m-1)W$.  Therefore
$(m-1)K-S_L\ge (m-1)(K-W)$. This implies
$$r_L=w(x) +S_L -(m-1)K \le w(x) - (q-1)(K-W). \eqno{(2)}$$

 On the other hand, pairing the whole
segment $xPa$ gives $w(x)+S_L+w(a)\le mW= (m-1)K- (m-1)(K-W) +W$. 
Hence 
$$r_L= w(x)+S_L-(m-1)K\le W-w(a)-(m-1)(K-W) \le W-w(a)-(q-1)(K-W). \eqno{(3)}$$ 
It follows from ~(2) and ~(3) that 
$r_L \le L - (q-1)(K-W)$.

By symmetry, $r_R\le R-(q-1)(K-W)$.
 \end{proof} 

 To complete the proof of Theorem \ref{thm:main}, 
 it suffices to prove that $$L+R-2(q-1)(K-W)\le K-w(y)-w(a) + |X \cap Z|. \eqno{(4)}$$

Since $X,Z\subseteq [K]-Y$, we have
$$ |X\cap Z|\ge  |X|+|Z|+|Y|-K \ge   L+R+|Y|-K. \eqno{(5)} $$  Plugging ~(5) into ~(4),  we conclude that ~(4) is implied by  $$  2(q-1)(K-W)\ge    w(a). \eqno{(9)}$$

It follows from Claim \ref{clm-rlrr} that  $W-w(a) \ge L \ge r_L + (q-1)(K-W) > (q-1)(K-W) $.
As $W \le 2q(K-W)$ and $q \ge 3$,
$$w(a)< W-(q-1)(K-W) \le (q+1)(K-W) \le 2(q-1)(K-W).$$

This completes the proof of Theorem \ref{thm:main}.

\bibliographystyle{plain}
\bibliography{ref.bib}

\end{document}